\documentclass[final]{siamltex}

\usepackage{amsfonts,amsmath,graphicx}
\usepackage{pmat,multirow,slashbox}
%-----------------------------------------------------
\SetSymbolFont{letters}{bold}{OML}{cmm}{b}{it}
\SetSymbolFont{operators}{bold}{OT1}{cmr}{bx}{n}
%-----------------------------------------------------

\def \f{{\bf f}}

\def \q{{\bf q}}
\def \u{{\bf u}}\def \x{{\bf x}}
\def \y{{\bf y}}\def \z{{\bf z}}
\def \dd{{\text{\! d}}}\def \R{\mathbb{R}}
\def \V{{\cal V}}\def \Q{{\cal Q}}
\newcommand{\suchthat}{\;\ifnum\currentgrouptype=16 \middle\fi|\;}
\newtheorem{remark}{Remark}[section]
\newcommand{\norm}[2]{\ensuremath{\| #1 \|_{#2}}}
\newcommand{\seq}[1]{\ensuremath{\left\{ #1 \right\}}}
%----------------------------------------------------
\title{Constraint interface preconditioning for\\ topology optimization problems}

\author{M. Ko\v{c}vara\footnotemark[2]
\and D. Loghin\footnotemark[3]
\and J. Turner\footnotemark[3]}

\begin{document}
\maketitle
\renewcommand{\thefootnote}{\fnsymbol{footnote}}

\footnotetext[2]{School of
    Mathematics, University of Birmingham, Edgbaston, Birmingham B15 2TT, UK,
    and Institute of Information Theory
and Automation, Czech Academy of Sciences, Pod
vod\'arenskou v\v{e}\v{z}\'{\i}~4, 18208 Praha 8, Czech Republic. The
work of this author has been partly supported by the EU FP7 project
AMAZE and by grant A100750802 of the Czech Academy of Sciences}
\footnotetext[3]{School of
    Mathematics, University of Birmingham, Edgbaston, Birmingham B15 2TT, UK}

\renewcommand{\thefootnote}{\arabic{footnote}}

\begin{abstract}
The discretization of constrained nonlinear optimization problems
arising in the field of topology optimization yields algebraic systems
which are challenging to solve in practice, due to pathological
ill-conditioning, strong nonlinearity and size. In this work we propose
a methodology which brings together existing fast algorithms, namely,
interior-point for the optimization problem and a novel substructuring
domain decomposition method for the ensuing large-scale linear systems.
The main contribution is the choice of interface preconditioner which
allows for the acceleration of the domain decomposition method, leading
to performance independent of problem size.
\end{abstract}

\begin{keywords}Topology optimization, domain decomposition, Newton-Krylov,
preconditioning, interior point\end{keywords}

\begin{AMS}65K10, 65N55, 65F10, 90C51, 49N90\end{AMS}

\pagestyle{myheadings} \thispagestyle{plain} \markboth{Constraint
interface preconditioning for topology optimization}{Constraint
interface preconditioning for topology optimization}

%----------------------------------------------------------------
\section{Introduction}
The aim of topology optimization is to determine the optimal
distribution of a certain amount of material within a prescribed
value in order to minimise the strain energy (or compliance) of a given
structure. The distinguishing feature separating this approach from
shape optimization involves the introduction of new boundaries,
allowing for the consideration of a broader range of feasible
solutions. The main problem when pursuing such an
approach is that a large number of design variables are required in the
discrete formulation in order to maintain the quality of the contours
in the final design. Solutions are typically obtained through the use
of iterative optimization techniques, requiring repeated
discretizations via the finite element method, corresponding to a
sequence of linearized problems. As a result, even problems resulting
from using relatively coarse discretization parameters can be
computationally demanding. Our aim in this paper is to introduce
solution methods adapted to the complex nature of this class of
problems.

Attempts to alleviate such difficulties can involve the application of
a faster finite element solver, or the use of efficient discretisation
techniques \cite{DeRose00}. Standard approaches based around Picard
iterations target the ill-conditioned equilibrium equations, from which
the bulk of computational effort resides. In \cite{Wang07}, MINRES
coupled with recycling is explored based on the observation that the
densities are only expected to undergo minor changes after a relatively
small number of iterative steps. The ill-conditioning is dealt with
through a preconditioning strategy involving both rescaling and an
incomplete Cholesky decomposition.

In terms of parallel computing, the application of the preconditioned
conjugate gradient (PCG) method coupled with Jacobi preconditioning has
been considered in a number of references, including
\cite{Borrvall01,Evgrafov08,Kim04,Mahdavi06,Vemaganti05}. Two
additional approaches are considered in \cite{Vemaganti05}, namely
preconditioning based on an ILU factorisation, as well as condensation
through substructuring coupled with a diagonal preconditioner for the
resulting interface problem.

Alternatively, primal-dual Newton methods can be considered, with
particular focus on interior point approaches. Examples illustrating
the application of such approaches for solving large scale topology
optimization problems can be found in \cite{Ben00,Hoppe04,Maar00}.
The KKT conditions from the resulting nonlinear equality constrained
optimization problem are then solved using Newton's method. Evidently,
for large scale problems, obtaining solutions to the resulting system
of equations will become expensive and even prohibitive in certain
cases. In \cite{Maar00}, Maar and Schultz applied multigrid to the
resulting system, and from their results were able to witness an
approximately linear overall complexity with respect to the number of
unknowns used in the problem.

In this paper, we propose to apply domain decomposition to the
resulting Newton system described above, which will then be solved
using GMRES coupled with an appropriate preconditioning strategy. An
important component within our preconditioner is based on targeting the
resulting interface problem, which will be achieved through the
consideration of an appropriate fractional Sobolev norm. Our paper will
conclude by illustrating that results can be obtained without
dependence on the chosen mesh parameter.

%----------------------------------------------------------------------------------
\section{Problem description}
Consider a material occupying an open and connected domain $\Omega
\subset \mathbb{R}^2$ with Lipschitz boundary $\partial \Omega =
\partial \Omega_D \cup \partial \Omega_N$. We assume that the
elasticity tensor for the material can be modeled as
$E(\x)=\rho(\x)^\mu\bar{E}$, where $\rho(\x)$ is the density,
$\mu\in[1,3]$ and $\bar{E}$ is a prescribed constant tensor. This
expression describes a common approximation of the material properties
known as the Solid Isotropic Material with Penalisation (SIMP) model.
In this paper we will be interested in the Variable Thickness Sheet
(VTS) problem, corresponding to the choice $\mu=1$, for which
existence of solutions holds (see, e.g.,
\cite[p.272--274]{Bendsoe03}). We assume that the material is clamped
(i.e., we assume Dirichlet conditions) 
along the subset $\partial\Omega_D$ of the boundary and that both body forces $f:\Omega \rightarrow
\mathbb{R}^2$ and boundary tractions $g:\partial \Omega_N \rightarrow
\mathbb{R}^2$ act on the material, resulting in a displacement $u$
which satisfies the following equilibrium equations in $\Omega$:
$$-\textrm{div}\sigma(u)=f,$$
where, by Hooke's law, the stress tensor is $\sigma(u)=E:\varepsilon(u)$. 

We employ next
a weak formulation in order to define our topology optimization
problem. The natural spaces arising in this context are the standard
spaces $L^2(\Omega),
H^1(\Omega)$ and
$$H^1_D(\Omega)=\seq{v\in H^1(\Omega): v_{\mid\partial\Omega_D}=0}.$$
For the description of the preconditioners associated with
a domain decomposition approach with corresponding interface $\Gamma$
between subdomains, we will require the
fractional Sobolev space 
$$\Lambda:=[H^1_D(\Gamma), L^2(\Gamma)]_{1/2}.$$
which is an interpolation space of index 1/2 (see \cite{Lions70} for
details).
%---------------------------------------------------------------------------
\subsection{Weak formulation} Let $\V=[H^1_D(\Omega)]^2$ and let
$a_{\rho}(\cdot,\cdot):\V\times \V\rightarrow \mathbb{R}$ and
$F(\cdot):\V\rightarrow \mathbb{R}$ be defined below
\[
a_{\rho}(w,v) =\int_{\Omega} \rho(\x)\left( \varepsilon(w):\bar{E}:\varepsilon(v)\right)\dd\x,
\]
where $\varepsilon(v)$ is the strain tensor corresponding to a
displacement $v$ and
\[
F(v) :=\int_{\Omega} f \cdot v \dd\x+\int_{\partial \Omega_N} g\cdot v\dd s.
\]
We seek $u\in\V$ such that for all $v\in\V$
$$a_\rho(u,v)=F(v).$$

In order to formulate our topology optimization problem, we define the
following admissible set for our design variable $\rho$:
\[
\Q^c=\seq{\rho\in \Q\equiv L^\infty(\Omega) \suchthat 0< \underline{\rho} \leq
  \rho(\x) \leq \overline{\rho} \text{ a.e.~in } \Omega, \int_\Omega\rho(\x)\dd\x=\mathcal{M}_{\Omega}},
\]
where $\mathcal{M}_{\Omega}$ denotes the amount of material
available, and $\overline{\rho}$ and $\underline{\rho}$ denote upper
and lower limits on the density function, respectively.

Consider now the following nonlinear minimization problem:
\begin{align}
   &\min_{(u,\rho)\in \V\times \Q^c} &&\hspace{-2cm} \frac{1}{2}a_\rho(u,u)\quad
   \left[=\frac{1}{2}F(u)\right] &&  \label{WFTOrho}\\
   &\textrm{subject to:} &&\hspace{-2cm} a_{\rho}(u,v)=F(v)
   &\hspace{-4cm} \forall v\in \V.&    \label{Elasticity}
\end{align}
The state variable corresponds to the displacement of the material $u$,
while the design variable is the density function
$\rho(\x):\Omega\rightarrow\mathbb{R}_+$.

Let now $\V_h, \Q_h^c$ denote finite-dimensional subspaces of $\V$ and
$\Q^c$, respectively and consider the following discrete weak formulation of our minimization problem:
\begin{align}
   &\min_{(u_h,\rho_h)\in \V_h\times \Q_h^c} &&\hspace{-2cm} \frac{1}{2}a_\rho(u_h,u_h)\quad \left[=\frac{1}{2}F(u_h)\right] &&  \label{WFTOrhoh}\\
   &\textrm{subject to:} &&\hspace{-2cm} a_{\rho}(u_h,v_h)=F(v_h)  &\hspace{-4cm} \forall v_h\in \V_h
.&    \label{Elasticityh}
\end{align}
Using finite element bases for $\V_h$ and $\Q_h$, the minimization
problem (\ref{WFTOrhoh}--\ref{Elasticityh}) yields the following
discrete constrained minimization problem:
\begin{align}
   &\min_{(\u,\pmb{\rho})\in\mathbb{R}^n\times\mathbb{R}^m} &&
   \hspace{-2cm} \frac{1}{2}\u^TA(\pmb{\rho})\u \quad\left[=\frac{1}{2}\f^T\u\right] &&  \label{WFTOrhod}\\
   &\textrm{subject to:} &&\hspace{-1.5cm} A(\pmb{\rho})\u=\f &
&    \label{Elasticityd}\\
&&&\hspace{-1.5cm} \q^T\pmb{\rho}= \q^T{\bf 1},&
&    \label{massconstraint}\\
&& \hspace{-2cm} \underline{\rho}{\bf 1} \leq \pmb{\rho} \leq
\overline{\rho}{\bf 1},&\label{densityconstraint}
\end{align}
where $q_i:=[\q]_i=\mid \!T_i\!\mid$. Working with a piecewise
constant approximation $\rho_h$, we can write the assembly process
of the stiffness matrix as follows:
$$A(\pmb{\rho})=\sum_{i=1}^m \pmb{\rho}_i A_i$$
where $A_i$ is global representation of the elemental matrix
corresponding to simplex $T_i$. This will allow for a certain
simplified expression for the Jacobian matrix when considering the
first-order conditions for our minimization problem.
%------------------------------------------------------------
\section{Interior point method} Traditional and popular approaches for
solving optimization problems such as
(\ref{WFTOrhod}--\ref{densityconstraint}) involve separate treatment of
both the design objective and the equilibrium equations. Typically, for
an initial given design the stiffness matrix is assembled and used to
solve the equilibrium equations for the displacement $\mathbf{u}$. This
$\mathbf{u}$ is then used to obtain an appropriate update to the design
variables, which is then checked for suitability based on previous
values. If an appropriate solution has yet to be found, the process is
repeated. Typical approaches used to obtain an update to the design
variables include both the Optimality Criteria (OC) method
\cite[p.308]{Bendsoe03} and the Method of Moving Asymptotes (MMA)
\cite{Svanberg02}, usually followed by both sensitivity and filtering
analysis to cater for the general SIMP setting \cite{Sigmund01}.

More recently, fully-coupled approaches have been receiving
considerable attention within the PDE constrained optimization
community. In these approaches, all constraints are included and no
sub-problems are being solved separately. An important feature within
these methods is that the equilibrium equations are embedded within the
optimization routine, allowing for the simultaneous treatment of all
constraints within the problem. Examples highlighting the benefits, and
in particular the savings in computational time for such methods, can
be found in a number of sources, including
\cite{Biros05,Birosa05,Hoppe04}. We describe below an interior point
method as applied to the topology optimization problem introduced
in (\ref{WFTOrhod}-\ref{densityconstraint}).

Interior point methods are used to solve both convex linear and
nonlinear optimization problems iteratively by considering updates
confined to the feasible region (cf.\ \cite{Byrd99, El96, Wright97}).
As well as obtaining solutions in a polynomial time, these methods have
been used to determine solutions to previously intractable problems,
meaning that they are useful from both a theoretical as well as a
practical view point. We begin by rewriting the formulation
(\ref{WFTOrhod}) slightly to incorporate the inequality constraints
within the objective function. This will be achieved through the use of
logarithmic barrier terms as illustrated below:
\begin{align}
   &\min_{(\u,\pmb{\rho})\in\mathbb{R}^n\times\mathbb{R}^m} &&
   \hspace{-1cm} \frac{1}{2}\u^TA(\pmb{\rho})\u- r \displaystyle\sum_{i=1}^m
   \log({\rho}_i - \underline{\rho}) - s \displaystyle\sum_{i=1}^m
   \log(\overline{\rho} - {\rho}_i)  && \label{DWFTOlog}\\
   &\textrm{subject to:} &&\hspace{-1cm} A(\pmb{\rho})\u=\f &
&    \label{Elasticityd1}\\
&&&\hspace{-1cm} \q^T\pmb{\rho}= \q^T{\bf 1}&
&    \label{massconstraint1}
\end{align}
with $r,s>0$ and where $[\pmb{\rho}]_i=\rho_i$.

The Lagrangian associated with the problem (\ref{DWFTOlog}--\ref{massconstraint1}) is
\begin{align}
\label{LagBar}
\mathcal{L}^{(r,s)} \left( \mathbf{v}, \lambda, \mathbf{u}, \boldsymbol{\rho} \right)
{ \mathrel{\,\vcenter{\baselineskip0.5ex \lineskiplimit0pt\hbox{\small.}\hbox{\small.}}}=\, }\frac{1}{2}\u^TA(\pmb{\rho})\u - r \displaystyle\sum_{i=1}^m
 \log(\rho_i - \underline{\rho}) - s \displaystyle\sum_{i=1}^m
 \log(\overline{\rho} - \rho_i)\ \ \ \\ +\mathbf{v}^T \left( \mathbf{f} -A(\boldsymbol{\rho}) \mathbf{u} \right) +\lambda \left(\q^T{\bf 1}-\q^T\pmb{\rho}\right). \nonumber
\end{align}

The stationary points are defined by setting to zero the relevant
partial derivatives of the Lagrangian (\ref{LagBar}):
\begin{align}
\nabla_{\mathbf{v}} \mathcal{L} &=  \mathbf{f} - A(\boldsymbol{\rho}) \mathbf{u}  = \mathbf{0}, \label{SP1} \\
\nabla_{\lambda} \mathcal{L} &=\q^T \pmb{\rho} -\q^T{\bf 1}= 0, \label{SP2} \\
\nabla_{\mathbf{u}} \mathcal{L} &= A(\boldsymbol{\rho}) \mathbf{u} - A(\boldsymbol{\rho}) \mathbf{v} = \mathbf{0}, \hspace{0.2in}  \label{SP3} \\
\nabla_{\boldsymbol{\rho}} \mathcal{L} &= \frac{1}{2}  B(\u)^T\u-
                                         B(\u)^T\mathbf{v} - \lambda \q - r X^{-1} \mathbf{1} + s \tilde{X}^{-1} \mathbf{1}  = \mathbf{0}. \label{SP4}
\end{align}
In the above, $B(\mathbf{u}){ \mathrel{\,\vcenter{\baselineskip0.5ex \lineskiplimit0pt\hbox{\small.}\hbox{\small.}}}=\, } \left[ A_1 \mathbf{u}, A_2 \mathbf{u}, \dots, A_m
\mathbf{u} \right] \in \mathbb{R}^{n \times m}$ and
\[ X{ \mathrel{\,\vcenter{\baselineskip0.5ex \lineskiplimit0pt\hbox{\small.}\hbox{\small.}}}=\, }\diag(\pmb{\rho}- \underline{\rho}{\bf 1}) \hspace{0.5in}
\tilde{X} { \mathrel{\,\vcenter{\baselineskip0.5ex \lineskiplimit0pt\hbox{\small.}\hbox{\small.}}}=\, } \diag(\overline{\rho}{\bf 1} - \pmb{\rho}).\]
It is important to note that the condition number of the Hessian of the
Lagrangian may pose an issue for densities close to either
$\underline{\rho}$ or $\overline{\rho}$ as both $r$ and $s$ tend to
zero. To alleviate this issue, auxiliary non-negative variables
$\boldsymbol{\phi}$ and $\boldsymbol{\psi}$ are introduced in the
following way
\begin{equation}
\label{PertComp}
\boldsymbol{\phi} = \boldsymbol{\phi}^{(r,s)} { \mathrel{\,\vcenter{\baselineskip0.5ex \lineskiplimit0pt\hbox{\small.}\hbox{\small.}}}=\, } r X^{-1} \mathbf{1}, \hspace{0.25in} \textrm{and} \hspace{0.25in} \boldsymbol{\psi} = \boldsymbol{\psi}^{(r,s)} { \mathrel{\,\vcenter{\baselineskip0.5ex \lineskiplimit0pt\hbox{\small.}\hbox{\small.}}}=\, }s \tilde{X}^{-1} \mathbf{1}.
\end{equation}
Using (\ref{PertComp}), we see that
\[
\Phi X \mathbf{1} = r \mathbf{1}, \hspace{0.25in} \textrm{and} \hspace{0.25in} \Psi \tilde{X} \mathbf{1} = s \mathbf{1}, \label{SP5}
\]
where $\Phi {\mathrel{\vcenter{\baselineskip0.5ex
\lineskiplimit0pt\hbox{\small.}\hbox{\small.}}}=}
\diag(\boldsymbol{\phi})$ and $\Psi
{\mathrel{\vcenter{\baselineskip0.5ex
\lineskiplimit0pt\hbox{\small.}\hbox{\small.}}}=}
\diag(\boldsymbol{\psi})$. Through this substitution, and the
elimination of the Lagrange multiplier $\mathbf{v}$ (which, by
(\ref{SP1}) and (\ref{SP3}), is equal to $-\mathbf{u}$), the first
order optimality conditions can be written as
\[
\mathcal{R} \left( \mathbf{u}, \lambda, \boldsymbol{\rho}, \boldsymbol{\phi}, \boldsymbol{\psi}  \right){ \mathrel{\,\vcenter{\baselineskip0.5ex \lineskiplimit0pt\hbox{\small.}\hbox{\small.}}}=\, } \nabla \mathcal{L}^{(r,s)} \left( \mathbf{u}, \lambda, \boldsymbol{\rho}, \boldsymbol{\phi}, \boldsymbol{\psi}  \right) = \left( \begin{array}{c}
\mathbf{f} - A(\boldsymbol{\rho}) \mathbf{u} \\
\q^T\pmb{\rho}-\q^T{\bf 1} \\
\frac{1}{2}  B(\u)^T\u + \lambda \q + \boldsymbol{\phi} - \boldsymbol{\psi}\\
r \mathbf{1} - \Phi X \mathbf{1}\\
s \mathbf{1} - \Psi \tilde{X} \mathbf{1} \end{array} \right) =
\left( \begin{array}{c}
\mathbf{0} \\
0 \\
\mathbf{0}\\
\mathbf{0}\\
\mathbf{0}\end{array} \right).
\]
By setting $\y= \left( \mathbf{u}, \lambda, \boldsymbol{\rho},
  \boldsymbol{\phi}, \boldsymbol{\psi} \right)^T$, Newton's method
applied to the above nonlinear optimality conditions has the following form
\begin{equation}
\label{NK1}
J\left( \y^{k-1} \right) \Delta \y^k= \mathcal{R}\left( \y^{k-1} \right),
\end{equation}
where the Jacobian matrix $J\left( \y\right)$ is given below
\begin{equation}
\label{JacobianFull}
J(\y) =
  \begin{bmatrix} A
(\boldsymbol{\rho}) & & B(\mathbf{u}) & & \\
    & & \q^T & & \\
    B(\mathbf{u})^T & \q & & I_{m} & -I_{m}\\
    &  & \Phi & X & \\
     & & -\Psi &  & \widetilde{X} \end{bmatrix}.
    \end{equation}
\begin{remark}\label{Jac_assumption}
  In the following, we will assume that problems of the form
  \textup{(\ref{DWFTOlog}--\ref{massconstraint1})} yield Jacobian matrices
  $J(\y)$ of the form \textup{(\ref{JacobianFull})}, which are non-singular for
  $\y$ in a neighbourhood of the solution.  This property will be
  assumed to hold for both Dirichlet and mixed boundary conditions,
  due to the well-posedness of problem
  \textup{(\ref{WFTOrho}--\ref{Elasticity})} for both mixed Dirichlet-Neumann
  and Dirichlet-only boundary conditions.
\end{remark}

Despite $J$ being both
nonsymmetric and indefinite, its condition number is expected to be
bounded under reduction of the barrier parameters $r$ and $s$.
Therefore, it is important to consider appropriate strategies for
obtaining an accurate update through (\ref{NK1}). Work by Forsgren,
Gill and Shinnerl \cite{Forsgren96} uses the diagonal structure of both
$\Phi$ and $\Psi$ to transform $J$ into a symmetric matrix. Another
possibility is to consider appropriate techniques to condense the
matrix $J$ via block elimination or using a Schur complement
approach. This approach is known to lead to ill-conditioning; however,
the effect on the accuracy of the resulting solution can be benign, as discussed by Wright
in \cite{Wright98}. Therefore, block elimination techniques
remain a practical option, particularly for the situation where the original matrix $J$ is
large. The drawback in this case is the loss of sparsity and the lack
of obvious (and efficient) preconditioners. For this reason, we look to exploit
the sparsity present in the original unreduced system by using an
iterative method coupled with an appropriate preconditioning strategy,
with the original matrix shown to exhibit favourable spectral
properties as shown in recent work by Greif et.\ al.\ \cite{Greif14}.
The preconditioner employed for this work will be based on a
decomposition of the domain $\Omega$ into subdomains, described in detail
from the next section onwards. In particular, we provide a description of
an interface preconditioning technique which was first introduced in
\cite{Arioli09} for solving scalar elliptic problems, and which is now adapted to the case of our constrained
PDE problem.

%----------------------------------------------------------------
\section{Domain decomposition}
\label{DomDecSect}
In order to formulate a decomposition of our problem that is suitable
for a parallel environment, we need to ensure that aside from the
physical decomposition of the domain, the resulting subproblems are
well-posed. It turns out that we can achieve this through a simple
reformulation of our minimization problem, which targets the mass
constraint (\ref{massconstraint1}).
\subsection{Standard definitions and notation}
Consider a subdivision of $\Omega$ into $N$ non-overlapping subdomains
$\Omega_k$ with boundaries $\partial\Omega_k$ such that
$$\bar{\Omega}=\bigcup_{k=1}^N\bar{\Omega}_k,~~~\Omega_k\cap\Omega_j\equiv\emptyset ~~
(k\neq j).$$ We denote the resulting interface by $\Gamma$: $$\Gamma { \mathrel{\,\vcenter{\baselineskip0.5ex \lineskiplimit0pt\hbox{\small.}\hbox{\small.}}}=\, }
\bigcup^{N}_{k=1} \Gamma_k,~~~~\Gamma_k { \mathrel{\,\vcenter{\baselineskip0.5ex \lineskiplimit0pt\hbox{\small.}\hbox{\small.}}}=\, }\partial\Omega_k \backslash
\partial\Omega.$$ For each $k$, we define the \emph{nodal index set}
$\nu_k$ to be the set of nodes strictly contained in $\Omega_k$ and the
\emph{simplex index set} $\tau_k$ to be the set of indices of all
simplices contained in $\Omega_k$. We further define $m_k{ \mathrel{\,\vcenter{\baselineskip0.5ex \lineskiplimit0pt\hbox{\small.}\hbox{\small.}}}=\, }\mid\!\tau_k\!\mid$.
%-------------------------------------------------------------
\subsection{Problem reformulation}
One of the obstacles in decomposing the VTS problem
(\ref{WFTOrhod}--\ref{densityconstraint}), or equivalently the interior
point formulation (\ref{DWFTOlog}--\ref{massconstraint1}), is the
global mass constraint (\ref{massconstraint}), or
(\ref{massconstraint1}), respectively. A suitable way to 'decompose'
this constraint is provided by the following equivalent formulation:
$$\q_k^T\pmb{\rho}_k=\mu_k,~~~(k=1,\ldots,N),~~~\sum_{k=1}^N\mu_k=\q^T{\bf 1},$$
where $\mathbf{q}_k$ and $\boldsymbol{\rho}_k$ represent the respective sub-vectors of $\mathbf{q}$ and $\boldsymbol{\rho}$ corresponding to the index set $\tau_k$. We note here that this approach introduces $N$ additional
unknowns $\left\{\mu_k, k=1,\ldots,N\right\}$, together with $N$
additional constraints. Note also that given the definition of $\q$,
$\mu_k$ represents the mass of subdomain $\Omega_k$.

The modified formulation corresponding to
(\ref{DWFTOlog}--\ref{massconstraint1}) is included below.
\begin{align}
  &\min_{(\u,\pmb{\rho})\in\mathbb{R}^n\times\mathbb{R}^m} &&\hspace{-1cm} \frac{1}{2} \u^TA(\pmb{\rho})\u- r \displaystyle\sum_{i=1}^m \log({\rho}_i - \underline{\rho}) - s \displaystyle\sum_{i=1}^m \log(\overline{\rho} - \rho_i),  && \label{DWFTOlog1}\\
   &\textrm{subject to:} &&\hspace{-1cm} A(\pmb{\rho})\u=\f ,&
&    \label{Elasticityd2}\\
&&&\hspace{-1cm} \q_k^T\pmb{\rho}_k= \mu_k,&\hskip-10em k=1,\ldots,N,
&    \label{massconstraintka}\\
&&&\hspace{-1cm} \sum_{k=1}^N\mu_k=\q^T{\bf 1}.&&\label{massconstraintkb}
\end{align}
The above problem is equivalent to minimization problem (\ref{WFTOrhod}-\ref{densityconstraint}) and
hence is well-posed.

Following the same procedure of differentiating the corresponding
Lagrangian function, one can derive the first-order optimality
conditions and set up Newton's iteration in the form (\ref{NK1}). The
resulting Jacobian matrix, also denoted by $J$, now has the form
\begin{equation}
\label{JacobianE}
J=
  \begin{bmatrix} A & & B & & & & \\
   & & Q & & & {-I_{N}} & \\
    B^T & Q^T & & I_m & -I_{m}& & \\
   & & \Phi & X& & & \\
    & & -\Psi & & \widetilde{X} & & \\
   & {-I_{N}} && && & {\mathbf{1}_{N}} \\
   & &  & & & {\mathbf{1}_{N}^T} & \end{bmatrix}.
\end{equation}
where $Q\in\R^{N\times m}$ is described below
\[
Q_{kj}:=
\begin{cases}
q_j & \text{if} \hspace{0.2in} j \in \tau_k, \\
0 & \text{otherwise}.
\end{cases}
\]
Using a node ordering comprising nodes interior to the subdomains
$\Omega_k$ followed by nodes on the interface $\Gamma$, the Jacobian
will have the following permuted block form (with subscripts $I$ and
$\Gamma$ indicating this ordering)
\begin{equation}
\label{ExpandedJ}
\begin{pmat}[{|}]
\renewcommand{\arraystretch}{1.5} J_{I I} & J_{I \Gamma} \cr\- J_{\Gamma I} & J_{\Gamma \Gamma} \cr \end{pmat} =
\begin{pmat}[{....|}]
\def\arraystretch{0.5}
A_{II} & & B_I & & & A_{I \Gamma} & & \cr
& & Q && &  & -I_{N} & \cr
B_I^T & Q^T & & I_{m} & -I_{m}& B_{\Gamma}^T &  & \cr
& & \Phi_m & X_{m} & & & &  \cr
 & & -\Psi_m &  & \widetilde{X}_{m} &  & & \cr \-A_{\Gamma I} &  & B_{\Gamma}  &  &   & A_{\Gamma \Gamma} & &  \cr
 & -I_{N}  &  &  &  & &  & \mathbf{1}_{N} \cr
 & &  & &  & &  \mathbf{1}^T_N &\cr
\end{pmat}.
\end{equation}
Under a further permutation which lists all the unknowns corresponding
to subdomains $\Omega_k$, for each $k=1,\ldots,N$, it can be seen that 
it has the block-diagonal structure $J_{II} :=\bigoplus_{k=1}^N J^k_{II}$.
Each block $J_{II}^k$ represents the instance
of the Jacobian for a minimization problem posed over the subdomain
$\Omega_k$ with Dirichlet boundary conditions on $\partial\Omega_k$ and
which has the familiar algebraic form (\ref{JacobianFull}):
$$J_{II}^k{ \mathrel{\,\vcenter{\baselineskip0.5ex \lineskiplimit0pt\hbox{\small.}\hbox{\small.}}}=\, }\begin{bmatrix}
  A_k & & B_k & & \\
    & & \mathbf{q}_{m_k}^T & & \\
    B_k^T & \mathbf{q}_{m_k} & & I_{m_k} & -I_{m_k}
\\
    &  & \Phi_{m_k} & X_{m_k} & \\
     & & - \Psi_{m_k} &  & \widetilde{X}_{m_k} \end{bmatrix}.
    $$
Here, $A_k,B_k$ are the counterparts of $A,B$
assembled on the interior of $\Omega_k$ and with Dirichlet
boundary conditions applied. By the well-posedness of problems of
the form (\ref{DWFTOlog}--\ref{massconstraint1}) for the case $\partial\Omega_N=\emptyset$, we
conclude that the associated first-order optimality conditions are
well-posed and yield a Jacobian matrix which is non-singular
(cf. Remark \ref{Jac_assumption}).

The direct-sum structure of $J_{II}$ was achieved through the
reformulation of the mass constraint and clearly allows for a
parallel implementation of the inverse of $J_{II}$, which will be a
key building block in the solution algorithm described in the next
section. It is also an interesting feature in itself to be able to
decompose the global Jacobian into local Jacobians associated with
similar local minimization problems. One consequence is that the
local problems inherit the well-posedness associated with the global
problem; the resulting decomposition is hence useful, with the local
problems invertible independently of each other.

%--------------------------------------------------------------------
\subsection{A Dirichlet-Dirichlet approach} At each step of the Newton
iteration we need to solve linear systems of the form
$$J\z={\bf r},$$
where $J$ is the Jacobian matrix (\ref{ExpandedJ}) corresponding to the
first order conditions for the reformulated problem
(\ref{DWFTOlog1}--\ref{massconstraintkb}). Due to the size and
structure of $J$, we seek to solve such systems using an iterative
solver with a suitable preconditioner. A proven candidate
is the block upper-triangular matrix below employed as a right preconditioner
\begin{equation}
\label{precP}
P=\begin{bmatrix}{J_{II}}&{J_{I\Gamma}}\\{}&\tilde{S}\end{bmatrix},
\end{equation}
where $\tilde{S}$ is an approximation to the Schur complement matrix
$$S=J_{\Gamma\Gamma}-J_{\Gamma I}J_{II}^{-1}J_{I\Gamma}.$$
Due to the direct-sum property of $J_{II}$, this block approach can
be classified in domain decomposition terminology as a non-overlapping Dirichlet-Dirichlet
procedure, where each subdomain block represents a Jacobian matrix
arising in some topology optimisation sub-problem posed on $\Omega_k$ where the material
is clamped on $\Gamma_k$.
Moreover, at each Newton iteration, the Schur complement $S$ can be seen as the
finite element discretization of a generalised Steklov-Poincar\'{e}
operator corresponding to the interface problem generated by the
decomposition. We remark here again that this approach is only
available via the re-formulation described above

The preconditioner inverse can be written as the product of three matrices:
\begin{equation}
\label{precPinv}
P^{-1} = \begin{bmatrix}
J_{II}^{-1} & 0 \\ 0 & I_{\Gamma \Gamma}
\end{bmatrix}\begin{bmatrix}
I_{II} & -J_{I \Gamma} \\ 0 & I_{\Gamma \Gamma}
\end{bmatrix}\begin{bmatrix}
I_{II} & 0 \\ 0 & \tilde{S}^{-1}
\end{bmatrix}.
\end{equation}
The application of $P^{-1}$ would initially
involve the action of $\tilde{S}^{-1}$ on the skeleton problem
corresponding to the interface $\Gamma$. 
Next, a
boundary-to-domain update would be applied through $J_{I \Gamma}$,
before applying the inversion in parallel of $J_{II}$ on subdomains. With the exception of
$\tilde{S}$, the potential for parallelism in (\ref{precP}), or equivalently
in (\ref{precPinv}), is evident. Therefore, the task is to seek an
appropriate representation to $\tilde{S}$ so that the preconditioner
can be assembled, stored and applied in an efficient manner.
This is discussed in detail in the next section.
%------------------------------------------------------------------------------
\section{Constraint interface preconditioners}
The Schur complement has the
following block $3 \times 3$ structure
\begin{equation}
S=
\begin{bmatrix}
{S_{11}} & {S_{12}} & {\bf 0} \\
{S_{12}^T} & {S_{22}} & {\mathbf{1}_N} \\
{\bf 0}^T & {\mathbf{1}^T_N}  & 0
\end{bmatrix},\label{schur}
\end{equation}
where the matrices $S_{11} \in \mathbb{R}^{n_{\Gamma} \times
n_{\Gamma}}$, $S_{12} \in \mathbb{R}^{n_{\Gamma} \times N}$ and $S_{22}
\in \mathbb{R}^{N \times N}$. The matrix $S_{22}$ is negative definite
and has a nearly-diagonal
structure with reducing entries as the Newton iteration progresses, while $S_{12}$ can be computed cheaply in parallel. The
main focus will therefore be the matrix $S_{11}$ which is associated
with the interface displacement nodes. This block dominates the Schur complement for the Jacobian,
and so the aim is to design a preconditioning procedure based on the
structure of $S$ above and in a manner that provides a suitable approximation $\tilde{S}_{11}$
of $S_{11}$. 
We note here that one can view the block-structure (\ref{schur}) as
the discretization of an operator corresponding to $S_{11}$ and
constrained by conditions incorporated in the remaining blocks.
\begin{remark}
  The Schur complement is symmetric and indefinite, with a negative
  inertia equal to $i_{-}=N+1$, where $N$ is the number of
  subdomains. Standard optimization approaches, such as projected CG,
  cannot be used in this case. Similarly, we expect positive
  definite preconditioners to be less effective. Consequently, we
  devised a novel approach which incorporates the
  indefiniteness (and the constraints) into our preconditioner. The resulting method can be
  loosely described as being of constrained type (cf. \cite{gould}).
\end{remark}

Given the structure of $S$ we propose a preconditioner with a similar
block form but with $S_{11}$ replaced by a suitable approximation. A direct calculation using the block form (\ref{ExpandedJ}) of
the Jacobian matrix yields
\begin{equation}
S_{11}=S_{\Gamma\Gamma}+E\label{eqE}
\end{equation}

where
\begin{equation}
  \label{elastschur}
  S_{\Gamma\Gamma}{ \mathrel{\,\vcenter{\baselineskip0.5ex \lineskiplimit0pt\hbox{\small.}\hbox{\small.}}}=\, }A_{\Gamma\Gamma}-A_{\Gamma I}A_{II}^{-1}A_{I\Gamma}
\end{equation}
is the Schur complement arising in a Dirichlet-Dirichlet
non-overlapping domain decomposition method for the elasticity
equations. The above splitting of $S_{11}$ suggests the following candidate for
a preconditioner:
\begin{equation}
S_0=
\begin{bmatrix}
{S_{\Gamma\Gamma}} & {S_{12}} & {\bf 0} \\
{S_{12}^T} & {S_{22}} & {\mathbf{1}_N} \\
{\bf 0}^T & {\mathbf{1}^T_N}  & 0
\end{bmatrix}.\label{schur0}
\end{equation}
This choice is computationally expensive due to the dense matrix
$S_{\Gamma\Gamma}$ and $S_0$ can only be seen as an ideal preconditioner,
very much like the Schur complement itself. However, the (1-1)-block
can afford approximations which make the application of $S_0$ efficient.
In the following, we restrict our
attention to $S_{\Gamma\Gamma}$. This matrix is the finite element representation of the
Steklov-Poincar\'{e} operator corresponding to the interface problem
for the elasticity equations, which is known to be continuous and
coercive on $\Lambda^2$.
The above properties can be shown using the continuity and
coercivity of the bilinear form $a_\rho(\cdot,\cdot)$ on $V$; for
details, see \cite{Quarteroni99}. The restriction to $\V_h$, in the
context of the finite element discretization of our problem, preserves these
properties; in turn, $S_{\Gamma\Gamma}$ can be shown to be spectrally equivalent
to the matrix representation of a norm on $\Lambda_h^2$. We describe
this discrete norm below.
\subsection{Discrete fractional Sobolev norms}
In order to describe the relevant norms further, we first describe the
relevant function spaces defined on the interface $\Gamma$. Let $\nabla_{\Gamma}$ represent the tangential gradient of a scalar function $v(\mathbf{x})$ such that
\[
\nabla_{\Gamma} v(\mathbf{x}) = \nabla v(\mathbf{x}) - \mathbf{n} \left( \mathbf{n} \cdot \nabla v(\mathbf{x}) \right),
\]
corresponding to the projection of the gradient of $v$ onto the plane
tangent to $\Gamma$ at the point $\mathbf{x} \in \Gamma$. We now define
\[
H^1(\Gamma) { \mathrel{\,\vcenter{\baselineskip0.5ex \lineskiplimit0pt\hbox{\small.}\hbox{\small.}}}=\, } \left\{ v \in L^2(\Gamma) \hspace{0.05in} \bigg{|} \hspace{0.05in} \displaystyle\int_{\Gamma} \lvert \nabla_{\Gamma} v \rvert^2 \,ds(\Gamma) < \infty \right\}.
\]
Let $\gamma:=\Gamma \cap \partial \Omega_D$ denote the set of
points on the Dirichlet boundary of the domain which are also on the
interface $\Gamma$; when this set is non-empty, we define the space 
\[
H^1_D(\Gamma) := \left\{ v \in H^1 (\Gamma) \hspace{0.05in} {\vert_{}} \hspace{0.05in} v_{|_\gamma} = 0 \right\}.
\]
We recall here that the space $\Lambda$ introduced above is the
fractional Sobolev space of index 1/2. More generally, we define the
scale of spaces
$\Lambda_\theta$ as the interpolation spaces of index $\theta\in(0,1)$
corresponding to the pair of spaces $\left\{H^1_{0}(\Gamma),L^2(\Gamma)\right\}$ in the
sense of Lions and Magenes \cite{Lions70}:
$$\Lambda_\theta{ \mathrel{\,\vcenter{\baselineskip0.5ex \lineskiplimit0pt\hbox{\small.}\hbox{\small.}}}= \, }[H^1_\gamma(\Gamma), L^2(\Gamma)]_\theta.$$
Using a formulation involving discrete interpolation spaces with
application to finite element spaces, one can define a finite
dimensional space $\Lambda_{\theta,h}\subset \Lambda_\theta$, together
with the following matrix representation of a discrete interpolation
norm \cite{ariolikourounisloghin}
\begin{equation}
  \label{honehalf}
  H_\theta{ \mathrel{\,\vcenter{\baselineskip0.5ex \lineskiplimit0pt\hbox{\small.}\hbox{\small.}}}= \, }[L_\Gamma,M_\Gamma]_{\theta}{ \mathrel{\,\vcenter{\baselineskip0.5ex \lineskiplimit0pt\hbox{\small.}\hbox{\small.}}}= \, }M_\Gamma(M_\Gamma^{-1}L_\Gamma)^{1-\theta},
\end{equation}
where $M_\Gamma$ and $L_\Gamma$ are the mass and Laplacian matrices assembled on $\Gamma$
using the restriction of the finite element basis for $\V_h$ to the
interface. Using this norm representation, the continuity and
coercivity properties of the elasticity Schur complement
$S_{\Gamma\Gamma}$  translate into the following spectral equivalence
\cite[p. 129]{Turner14}
\[
  c_1\norm{\u}{H}\leq \norm{\u}{S_{\Gamma\Gamma}}\leq c_2\norm{\u}{H},
\]
where $H{ \mathrel{\,\vcenter{\baselineskip0.5ex \lineskiplimit0pt\hbox{\small.}\hbox{\small.}}}=\, }H_{1/2}\oplus H_{1/2}$. This equivalence is the basis of our candidate for the
approximation of the Schur complement $S$.

The matrix $H_\theta$ is in general full and expensive to
compute. However, an implementation of the action of the inverse of
$H_\theta$ on a given vector can be achieved cheaply using a Krylov
subspace approximation constructed via a generalised inverse Lanczos
iteration which only requires the application of the inverse of the
sparse matrix $L_\Gamma$ for a small number of steps
\cite{ariolikourounisloghin}. In the following section we indicate
how to extend this procedure to the case of our proposed preconditioner.

\subsection{Constraint preconditioners}
Given the constraint form (\ref{schur}) of the interface Schur
complement, we propose the following choice of preconditioner which
preserves the constraining blocks while replacing the (1,1)-block by a
spectrally equivalent matrix:
\begin{equation}
\label{S1}
S_1{ \mathrel{\,\vcenter{\baselineskip0.5ex \lineskiplimit0pt\hbox{\small.}\hbox{\small.}}}=\, }
\begin{bmatrix}
H & {S_{12}} & {\bf 0} \\
{S_{12}^T} & {S_{22}} & {\mathbf{1}_N} \\
{\bf 0}^T & {\mathbf{1}^T_N}  & 0
\end{bmatrix}.
\end{equation}
The notion of constraint preconditioning has been extensively analysed
in the context of saddle-point problems \cite{gould, luksan}. In our case,
the matrix structure is of a different nature; however, given the
spectral equivalence between $S_{11}$ and $H$, we hope to achieve a
useful similarity  between $S$ and $\tilde{S}$. Our motivation is the
following result.
\begin{proposition}\label{eigprop}
  Consider the generalized eigenvalue problem
  $${\cal K}\z=\lambda{\cal G}\z,$$
  where
  $${\cal K}=\begin{bmatrix}
K & D & {\bf 0} \\
{D^T} & {F} & {\mathbf{1}_N} \\
{\bf 0}^T & {\mathbf{1}^T_N}  & 0
\end{bmatrix},~~~{\cal G}=\begin{bmatrix}
G & D & {\bf 0} \\
{D^T} & {F} & {\mathbf{1}_N} \\
{\bf 0}^T & {\mathbf{1}^T_N}  & 0
\end{bmatrix},$$
with $K, G\in\mathbb{R}^{n_\Gamma\times n_\Gamma},
F\in\mathbb{R}^{N\times N}$ nonsingular. Let $Z$ be a basis for the nullspace of $\mathbf{1}_N$.
Then
\begin{enumerate}
\item $\lambda=1$ with multiplicity $N+1$;
\item the remaining $n_\Gamma$ eigenvalues satisfy the eigenvalue
  problem
  $$(K-Q)\z=\lambda(G-Q)\z$$
  where $Q=DZ(Z^TFZ)^{-1}(DZ)^T$.
\end{enumerate}
\end{proposition}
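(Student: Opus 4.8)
The plan is to reduce the $(n_\Gamma + N + 1)$-dimensional generalized eigenproblem to an $n_\Gamma$-dimensional one by exploiting the shared constraint blocks $D$, $F$, $\mathbf{1}_N$ in both $\cal K$ and $\cal G$. Write $\z = (\x, \y, \z_0)$ partitioned conformally with the block sizes $n_\Gamma$, $N$, $1$. The first observation is that if $\lambda = 1$, then $({\cal K} - {\cal G})\z = 0$, i.e.\ $(K-G)\x = 0$; conversely any $\z$ with $\x \in \ker(K-G)$ is an eigenvector for $\lambda = 1$. But that only detects the generic eigenvalue $1$ through a kernel that may be trivial, so instead I would argue structurally: the vectors $\z$ with $\x = 0$ form an $(N+1)$-dimensional subspace on which $\cal K$ and $\cal G$ act identically (both reduce to the bottom $2\times 2$ block $\left[\begin{smallmatrix} F & \mathbf{1}_N \\ \mathbf{1}_N^T & 0 \end{smallmatrix}\right]$), so this subspace is invariant and contributes $\lambda = 1$ with multiplicity at least $N+1$. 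This handles item (1).

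For item (2), I would use a nullspace decomposition of the remaining $n_\Gamma$ directions. Let $Z$ be a basis for $\ker(\mathbf{1}_N^T)$ (dimension $N-1$), so the last row $\mathbf{1}_N^T \y = 0$ of the eigen-equation forces $\y = Z\w$ for some $\w$. The second block row gives $D^T\x + FZ\w + \mathbf{1}_N \z_0 = \lambda(D^T\x + FZ\w + \mathbf{1}_N \z_0)$ — wait, more carefully: the second block row of ${\cal K}\z = \lambda{\cal G}\z$ is $D^T\x + F\y + \mathbf{1}_N\z_0 = \lambda(D^T\x + F\y + \mathbf{1}_N\z_0)$, which is automatically consistent, so the real content is in eliminating $\y$ and $\z_0$. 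Substituting $\y = Z\w$ and projecting the second block row onto the range of $Z$ (i.e.\ multiplying by $Z^T$, which kills the $\mathbf{1}_N\z_0$ term since $Z^T\mathbf{1}_N = 0$) yields
\[
Z^T D^T \x + Z^T F Z \w = \lambda\, (Z^T D^T \x + Z^T F Z \w),
\]
so for $\lambda \neq 1$ we must have $Z^T D^T \x + Z^T F Z \w = 0$, hence $\w = -(Z^TFZ)^{-1}(DZ)^T\x$, giving $\y = -Z(Z^TFZ)^{-1}(DZ)^T\x = -\,G^{-1}$... — rather, $D\y = -DZ(Z^TFZ)^{-1}(DZ)^T\x = -Q\x$. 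Feeding this into the first block row $K\x + D\y = \lambda(G\x + D\y)$ produces exactly $(K - Q)\x = \lambda(G - Q)\x$, the claimed reduced problem. One then checks the dimension count: the reduced problem has $n_\Gamma$ eigenvalues, the $\y=0$ subspace gave $N+1$, total $n_\Gamma + N + 1$, matching the full dimension, so no eigenvalues are lost. Note $Z^TFZ$ is invertible because $F$ is nonsingular and — here one needs a small remark — either $F$ is definite or one argues invertibility of the projected block directly; I would add the hypothesis that $Z^TFZ$ is nonsingular, or observe it follows from $F$ being (negative) definite as is the case for $S_{22}$ in the application.

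The main obstacle I anticipate is bookkeeping around the case $\lambda = 1$: the reduced problem $(K-Q)\x = \lambda(G-Q)\x$ may itself have eigenvalue $1$ (indeed it does, on $\ker(K-G)$), so the multiplicity statement in item (1) is a \emph{lower} bound of $N+1$ coming from the structural invariant subspace, and one must be careful not to double-count or to overclaim that the $\lambda=1$ eigenspace is exactly $(N+1)$-dimensional. I would state item (1) as "$\lambda = 1$ is an eigenvalue of multiplicity at least $N+1$" and let item (2) account for the rest, making explicit that the two eigenvalue lists together exhaust the spectrum. The only other delicate point is verifying that when $\lambda \neq 1$ the reconstruction $\y = Z\w$, $\z_0$ arbitrary subject to the first block row, genuinely produces an eigenvector of the full pencil and not a spurious solution — this is a routine back-substitution check that I would perform but not belabor.
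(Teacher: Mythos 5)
Your argument is correct and is essentially the nullspace-elimination argument behind the constraint-preconditioner spectral results in the reference the paper cites for this proposition (the paper itself only points to \cite{gould} and \cite{Turner14} rather than writing the proof out): the block where $\cal K$ and $\cal G$ coincide supplies the $N+1$ unit eigenvalues, and for $\lambda\neq 1$ the last two block rows force $\y=Z\w$ with $\w=-(Z^TFZ)^{-1}(DZ)^T\x$, reducing the first block row to $(K-Q)\x=\lambda(G-Q)\x$. Your two side remarks are also well taken --- the multiplicity in item (1) is a priori only a lower bound (exact when $K-G$ is nonsingular), and the statement implicitly needs $Z^TFZ$ invertible, which in the application follows from the negative definiteness of $S_{22}$ rather than from the stated nonsingularity of $F$.
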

\begin{proof}
Similar to proof given in \cite{gould} (see also \cite[pp 180--181]{Turner14}).
\end{proof}
\begin{remark}
  The above result indicates that the increase in size by $N$ due to our problem reformulation
  (\ref{DWFTOlog1}--\ref{massconstraintkb}) is automatically taken care
  of by our constraint interface preconditioner, which maps $N+1$
  eigenvalues to 1. The remaining eigenvalues will depend on the
  closeness of our preconditioner $H$ to $S_{11}$. As described above,
  we tried to incorporate this property by a choice of $H$ that is
  spectrally equivalent to the dominant part of $S_{11}$, namely $S_{\Gamma\Gamma}$.
\end{remark}

The main issue with the definition of $S_1$ is the full matrix $H$
arising in the (1,1)-block. This matrix needs to be computed via an
expensive matrix square-root calculation; moreover, it needs to be
employed in order to implement the action of the inverse of $S_1$ on a
given vector. A practical alternative can be derived from the following
constrained Lanczos decomposition of $S_1$.

Let $$V_\Gamma^TL_\Gamma V_\Gamma=T_\Gamma, ~~V_\Gamma^TM_\Gamma V_\Gamma=I_\Gamma$$ denote the generalized
Lanczos decomposition of the pencil $[L_\Gamma,M_\Gamma]$ in exact
arithmetic, where $V_\Gamma$ is an orthogonal matrix and $T_\Gamma$ is
a tridiagonal, symmetric and positive-definite matrix. Define
$T=T_\Gamma\oplus T_\Gamma, V=V_\Gamma\oplus V_\Gamma$. Then
$H=VT^{1/2}V^T$ (see also \cite{ariolikourounisloghin}). Consider the QR-factorization
$$UR=
\begin{bmatrix}
  {S_{12}} & {\bf 0}
\end{bmatrix} V,$$
where $U$ is orthogonal and define
$$D:=
U^T\begin{bmatrix}
  S_{22}& {\bf 1}\\
  {\bf 1}^T & 0
\end{bmatrix}U.
$$
We obtain the following orthogonal factorization of $S_1$
$$S_1=
\begin{bmatrix}
  V & \\ & U
\end{bmatrix}
\begin{bmatrix}
  T^{1/2}  & R^T\\R & D
\end{bmatrix}
\begin{bmatrix}
  V^T& \\ & U^T
\end{bmatrix}=:W{\cal T}W^T.
$$
We will refer to the above representation of $S_1$, seen as a
two-by-two block matrix, as the constrained
Lanczos factorization of $S_1$.\\
Given our preconditioning task, consider now the product
$$\z=S_1^{-1}{\bf v}=W{\cal T}^{-1}W^T{\bf v}.$$
An approximation to $\z$ can be constructed using a partial
factorization:
$$\z\approx \z_k:= W_k{\cal T}_k^{-1}W_k^T{\bf v}$$
where we define
$$
W_k:=
\begin{bmatrix}
  V_k&\\&U_k
\end{bmatrix},~~~
{\cal T}_k:=
\begin{bmatrix}
  T_k^{1/2}  & R_k^T\\R_k & D_k
\end{bmatrix},~~~
D_k=U_k^T\begin{bmatrix}
  S_{22}& {\bf 1}\\
  {\bf 1}^T & 0
\end{bmatrix}U_k.
$$
The preconditioning operator implicit in the definition of
$\z_k$ will be denoted by $S_2$:
$$\z\approx S_2^{-1}{\bf v}.$$
Its implementation requires:
\begin{itemize}
\item[i.] a partial Lanczos factorization of the (1,1)-block of $S_1$; this yields a
  matrix $V_k$ of Lanczos vectors and a block tridiagonal matrix $T_k$;
\item[ii.] a QR-factorization of the (1,2)-block of $S_1$ multiplied by $V_k$; this
  yields the factors $U_k,R_k$ arising in $W_k$ and ${\cal T}_k$, respectively.
\end{itemize}
The number of Lanczos vectors in $V_k$ is expected to be small;
correspondingly, the sizes of $R_k, D_k$ will also be small and the
resulting matrix ${\cal T}_k$ will be easy to invert. The overall
complexity for the above procedure is of order $O(kn_\Gamma)$. Thus,
the application of the interface preconditioner will not dominate the
cost of a subdomain solve provided we work with 
subdivisions for which $kn_\Gamma=O(n_{I_i}^2)$, where we assumed that the
cost of inverting a subdomain Jacobian matrix is the cost that a direct method
requires to invert a banded matrix of size $n_{I_i}$ with bandwidth $\sqrt{n_{I_i}}$.
%-------------------------------------------------------------------
\section{Numerical experiments}
In order to illustrate the performance of the solution method
described in Section \ref{IPAlg}, we consider a model test problem
based on compliance design. The problem involves a cantilever beam posed
on a rectangular design domain as illustrated in Figure
\ref{DomainLayouts}, with clamping applied along the left edge and
a force applied in the middle of the right edge. The density contour plot
corresponding to the optimal design is displayed in Figure
\ref{SolnPlots}. Symmetry is  a feature which may be exploited computationally; however in this paper we chose to retain the original design domain in order to test the performance of our solution method on the full problem.
\begin{figure}[!b]
  \begin{center}
    \includegraphics[height=2.4cm]{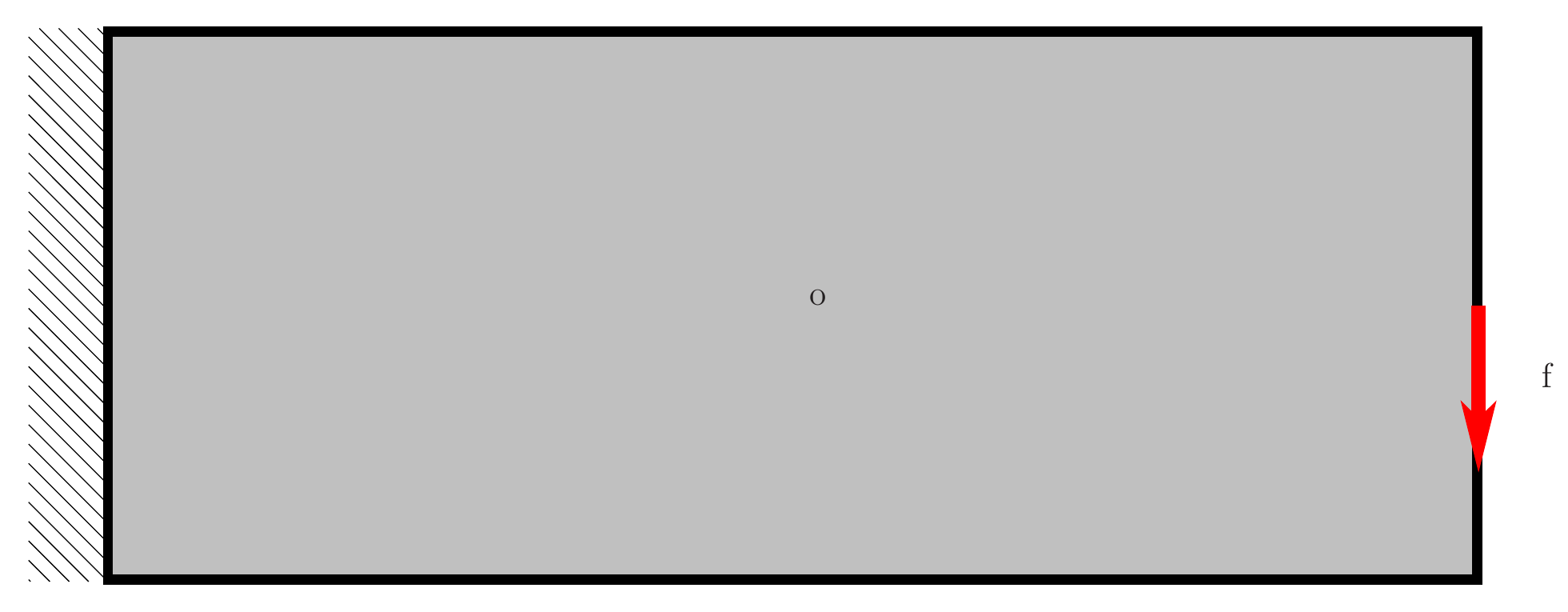}
  \end{center}
\caption{Illustration of the Cantilever Beam problem.}\label{DomainLayouts}
\end{figure}
\begin{figure}[!t]
  \begin{center}
    \includegraphics[width=5.4cm]{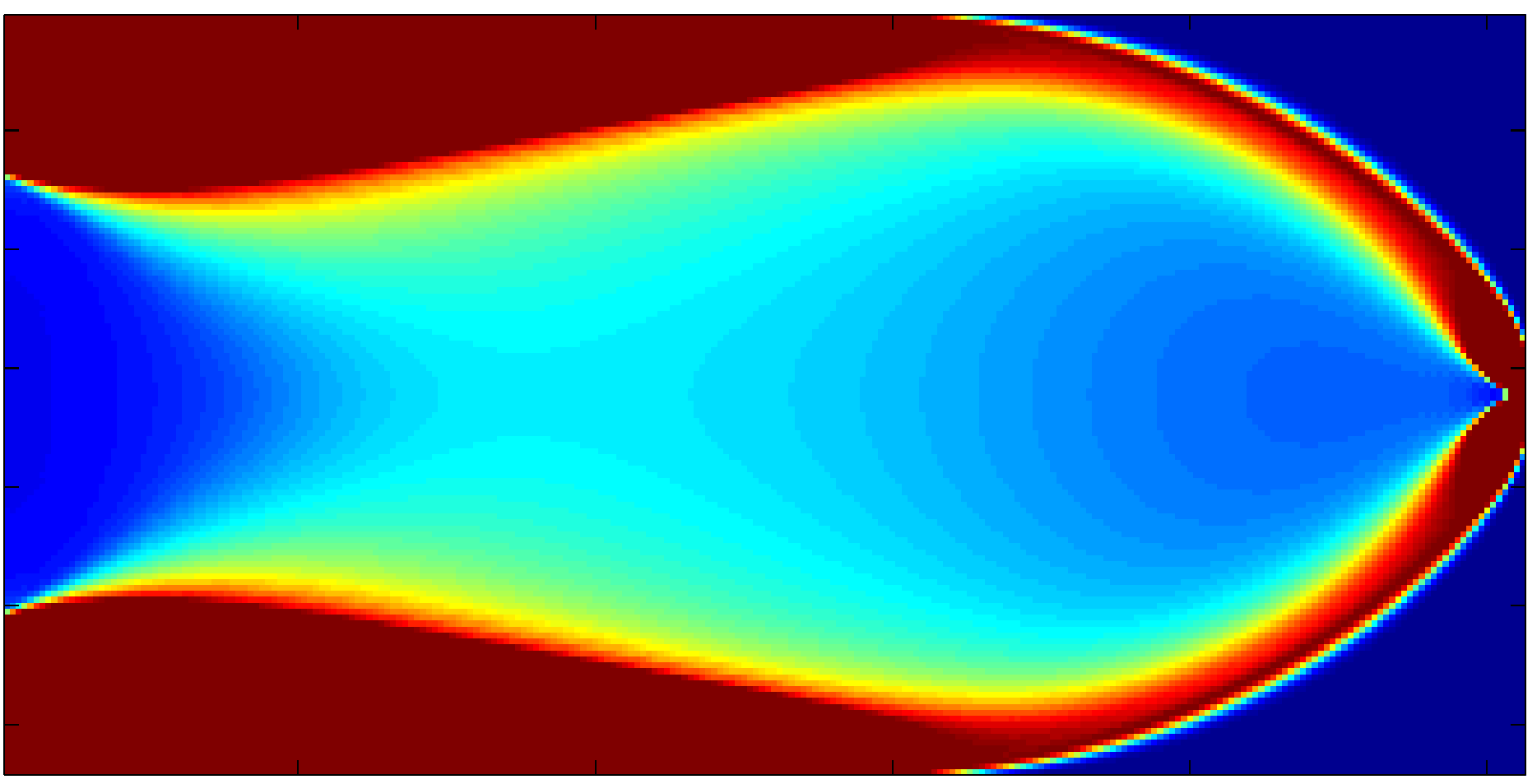}
  \end{center}
\caption{Contour plots of optimal density distribution for the model problem.} \label{SolnPlots}
\end{figure}
%----------------------------------------------------------------
\subsection{Implementation details}\label{IPAlg}
\subsubsection{Finite Element Method} We use a subdivsion ${\cal T}_h$
of $\Omega$ into square elements of size $h$. We seek approximations
$$u_h\in \V_h=\left\{v\in (H_D^1(\Omega))^2:v_{\mid T}\in {\cal
    P}_2(T)\right\}, ~~\rho_h\in \Q_h=\left\{q\in L^\infty(\Omega):q_{\mid T}\in {\cal
    P}_0(T)\right\},$$
for all $T\in{\cal T}_h$, where ${\cal P}_k(U)$ denotes the space of
degree $k$ polynomials defined on $U$.
We employ uniform refinement, in order to exhibit the behaviour of our
preconditioning technique with
respect to the mesh parameter $h$. We note here that the space $\Q_h^c$
is not constructed explicitly, and that it was only introduced in
order to provide the mathematical description of our problem.
\subsubsection{Interior Point Algorithm} We use a text-book interior-point algorithm; the
key details are included below; for full details see \cite[Ch. 19]{nocedal-wright}.

We repeat the following steps until convergence:
\begin{enumerate}
\item Solve a sequence of Newton systems (\ref{NK1}) to get $\Delta \y^k$.
\item Find the step length $\alpha$ (see below).
\item Update the solution
$$
  \y^k = \y^{k-1} + \alpha \Delta \y^k\,.
$$
\item Update the penalty parameters via
$$
  r:=r/4,~~~s:=s/4.
$$
\end{enumerate}
We start with $r=s=1$ and continue until they are brought below a
tolerance of $10^{-6}$. A more sophisticated version of the code, with
an adaptive choice of the penalty parameters $r$ and $s$ can be found in
\cite{Jarre98}. However, we have never observed any difficulties with
this simple update of $r$ and $s$.

\vspace{.1cm}\noindent {\bf Step length.} In the interior-point method
we cannot take a full Newton step resulting from the solution of (\ref{NK1}), as this would lead to an
infeasible point. We propose a technique that will
effectively involve finding $\alpha_L$ such that $\rho_i + \Delta
\rho_i \geq \underline{\rho}$ for $i$ such that $\Delta \rho_i < 0 $
and $\alpha_U$ such that $\rho_i + \Delta \rho_i \leq \overline{\rho}$
for $i$ such that $\Delta \rho_i > 0 $. Therefore, we consider
obtaining $\alpha_L$ and $\alpha_U$ as follows
\begin{align*}
\alpha_L &= 0.9 \cdot \min_{i:\Delta\rho_i < 0} \left\{ { \frac{ \underline{\rho} - \rho_i}{\Delta \rho_i} } \right\}, \\
\alpha_U &= 0.9 \cdot \min_{i:\Delta\rho_i > 0} \left\{{ \frac{ \overline{\rho} - \rho_i}{\Delta \rho_i} } \right\}. 
\end{align*}
The constant $0.9$ represents an appropriate shortening of the Newton
step to the interior of the feasible region. In the event
that both $\alpha_L$ and $\alpha_U$ are greater than $1$, the step will
be shortened appropriately:
\begin{equation*}
\alpha = \min \left\{{ \alpha_L , \alpha_U , 1  } \right\}.
\end{equation*}
This procedure is relatively simple
and straightforward to both implement. A more sophisticated
line search procedure is described in \cite{Jarre98} and could
potentially be used here. However, it will be illustrated later that
the current technique was able to yield desirable results.

\vspace{.1cm}
\noindent{\bf Initial approximation.}
We start the interior-point algorithm with a uniform distribution of
the design variable $\pmb{\rho}$ chosen with respect to the
mass
constraint (\ref{massconstraint1})
$$
  \boldsymbol{\rho}_{\rm ini}=\mathbf{1} \,.
$$
The initial displacements $\mathbf{u}_{\rm ini}$  are then computed from the equilibrium
equation $$A\left(\boldsymbol{\rho}_{\rm
ini}\right)\mathbf{u}_{\rm ini} = \mathbf{f}.$$
%-----
\subsubsection{Domain decomposition}
We used only regular subdivisions into rectangular subdomains. The
corresponding sizes of the resulting interfaces are illustrated in
Table \ref{meshtable}.  It is evident (and well-known) that in order to balance the
complexities of the interface and subdomain problems, the increase in
the number of subdomains should be paralleled by a decrease in $h$. 
Aside from regular decompositions, one could also decompose
the domain in an adaptive fashion based on the changing nature of the
design. This could be carried out either at each outer iteration, or
alternatively once after a fixed number of outer iterations based on
the (previously mentioned) observation that the density will only be
subject to minor changes after a relatively small number of iterative
steps. In terms of a non-regular subdivision, the graph partitioning
tool METIS \cite{Karypis98} may be used in order to partition the
finite element mesh into non-regular subdomains.
\begin{table}[!t]
 \renewcommand{\arraystretch}{1.3}
 \small{
   \hspace*{0.0cm}
   \begin{center}
     \begin{tabular}{|c|c|c|c|c|}
       \cline{3-5}
       \multicolumn{2}{c}{} &\multicolumn{3}{|c|} {$n_\Gamma$} \\
       \cline{1-5}
       $h$ & $n$& $N=4$  & $N=16$& $N=64$ \\
       \hline
       $1/64$   & 41,355 & 384 &1,140 &2,604 \\
       $1/128$   & 164,619 &768 &2,292 & 5,292\\
       $1/256$   & 657,027 &1,536 &4,584 & 10,668\\
       \hline
     \end{tabular}
     \caption{Mesh information for Cantilever Beam experiment}\label{meshtable}
   \end{center}}
\end{table}
\subsubsection{GMRES} We used an inexact Newton-GMRES method
preconditioned by either $S_0, S_1$;  flexible GMRES was used for the case of $S_2$. The
GMRES stopping criterion was the reduction of the norm of the initial residual
by a factor of $10^6$.
\subsubsection{Constrained Lanczos factorization} The implementation
of the action of $S_2^{-1}$ requires first to generate the matrices
$S_{12}$ and $S_{22}$. This is achieved as an additional
pre-processing step involving one set of subdomain solves. The number
of Lanczos vectors is taken to be $k=O(\sqrt{n_\Gamma})$, so that the
overall complexity of using the preconditioner is of order $O(n_\Gamma^{3/2})$.
\subsection{Numerical results}
Table \ref{Results} displays the results for our test case for a range
of mesh and subdomain sizes. The results were obtained using a Linux
machine with an Intel{\tiny{\textregistered}} Core\texttrademark \, i7
CPU 870 $@$ 2.93 GHz with 8 cores. The upper and lower limits on the density
$\rho$ were set at $1$ and $10^{-2}$ respectively, with the
permissible volume in each test case defined to be ${\cal M}_\Omega/2$.

 The first observation arising from our numerical experiments is that
 $S_0$ has performance independent of mesh-size and almost independent
 of the number of subdomains; moreover, the number of iterations is
 low (averages between 5--8 iterations on the largest problem), with
 only a small departure from the optimal count of 2 iterations corresponding to the case
 where the exact Schur complement is used. This confirms the earlier 
 assumption that the matrix $E$ in (\ref{eqE}) is negligible and that
 the preconditioner $S_0$ is optimal in the same sense as the Schur
 complement. 
 \begin{table}[t]
 \begin{center}
 {\centering{
 {\resizebox{\textwidth}{!}{
 \def\arraystretch{1.5}
 \begin{tabular}{| c | c || c  c  c  | c  c  c  | }
 \cline{3-8}
 \multicolumn{1}{c}{} & \multicolumn{1}{c|}{} &
                                                \multicolumn{3}{c|}{Avg
                                                GMRES (Newton)} & \multicolumn{3}{c|}{Total GMRES Its.} \\
\hline
 \multicolumn{2}{|c|}{No. Subdomains:} & 4 & 16 & 64 & 4 & 16 & 64\\
\hline 
Preconditioner \, & \backslashbox{ \hspace{0.2in} $ h$ }{\vspace{-0.08in} $\theta$ } & 0.5 & 0.6 & 0.7  & 0.5 & 0.6 & 0.7 \\
 \hline
 \hline
 \multirow{3}{*}{$S_0$} 
 &1/64 &7.29 (14) &10.57 (14) &13.86 (14) &102 &148 & 194\\
 &1/128 &5.93 (15) &8.13 (15) &8.93 (15) &89 &122 &134 \\
 &1/256 &5.25 (16) &6.81 (16)& 7.50 (16)&84  & 109& 120\\
\hline
\multirow{3}{*}{$S_1$} 
 &1/64 &11.36 (14) &25.64 (14) &35.00 (13)&159 &359 & 455\\
 &1/128 &10.60 (15) &21.80 (15) &33.12 (16) & 159 &327 &530\\
 &1/256 &10.25 (16) &19.94 (16) &29.82 (17) &164 &319 & 507\\
\hline
 \multirow{3}{*}{$S_2$} 
 &1/64 &12.21 (14) &31.36 (14) &50.46 (13)&171 &439 &656\\
 &1/128 &12.00 (15) &27.47 (15) &43.47 (15) & 180&412 &652\\
 &1/256 &11.00(16) &26.41 (17) &40.06 (17)&176 &449 & 681\\
 \hline
 \end{tabular}
 }}}}
 \caption{GMRES (Newton) iterations required for solving (\ref{DWFTOlog}) using preconditioners $S_0, S_1, S_2$.}
 \label{Results}
 \end{center}
 \end{table}

With regard to parameter dependence, we note that the number of
iterations decreases with $h$ in all experiments. This is somewhat
expected, given that $S_2$ is essentially an approximate
implementation of $S_1$, which incorporates in the (1-1) block the
finite element discretization of the continuous Steklov-Poincar\'{e}
operator for the elasticity equations. An interesting fact is
that the preconditioning technique $S_2$ appears to outperform occasionally
 the preconditioner $S_1$; however, different preconditioners lead to
 different Newton convergence histories given the adaptive stopping
 criterion employed, which may result in overall complexities more
 favourable for the constrained preconditioning approach.

The table also indicates what
appears to be a logarithmic dependence on the number of subdomains. We
found this difficult to analyze, but this behaviour is not unlike that
exhibited by similar substructuring preconditioning techniques in
\cite{ariolikourounisloghin}. This suggests that the preconditioner
$S_2$ has the ability to match the properties of the
discrete fractional Sobolev norm in a constrained setting. This
represents a novel approach which could be useful in other constrained
optimization settings and for other PDE models. 

The values of $\theta$ listed in Table \ref{Results} were chosen by
experimentation based on similar observations for the linear
elasticity problem reported in \cite{Turner13}, where it is noted that
different values of $\theta$ may be able to provide a closer
approximation to the decay of the associated Steklov-Poincar\'e
operator. Similar findings were found numerically in this work also,
with the best values of $\theta$ used in order to produce the results
reported in the table.

Despite the fact that a logarithmic dependence is noted for an
increasing number of subdomains, the computational benefits gained as
a result of distributing calculations amongst an increasing number of
processors can lead to a significant speedup when compared to solving
the original problem on the global domain $\Omega$. Exact calculations
displaying this behaviour are not included here, however the
interested reader is referred to \cite{Turner13}, where notable
speedup was observed under the preconditioner described in
(\ref{honehalf}) for the Optimality Criteria method for topology optimization, which, unlike the
interior point approach in this paper, requires the solution of a
linear elasticity problem at each step.

\section{Conclusions} We described a novel domain decomposition approach
coupled with an interior point method for compliance
minimisation problems arising in the field of topology
optimization. The problem was reformulated in order to allow for
well-posed subdomain problems which could be viewed as local Jacobian
solves. This was an important step which allowed for the domain
decomposition method to be well-defined. The resulting interface Schur
complement problem yielded an indefinite matrix which included a
global volume constraint; consequently, an indefinite preconditioner
was devised in order to incorporate the properties and the structure
of the interface Schur complement matrix. This resulted in a technique
requiring the inversion of a so-called constrained discrete fractional
Sobolev norm, the application of which was performed via a certain
constrained Lanczos procedure. We tested the resulting method on a standard
test problem of topology optimization, with experiments indicating independence on the mesh
parameter, although a depedence on the number of subdomain was
noticed. 

Some of our current investigations include  non-regular decompositions, as well as adaptive
decompositions based on current iterates. The constrained Lanczos
factorization, which allowed the sparse implementation of our interface
preconditioner, will be the subject of further study. The precise role played by the parameter
$\theta$ also requires more analysis and experimentation in order to
quantify the most appropriate value of $\theta$ for a decomposition into any given
number of subdomains. Finally, the reformulation of the problem
resulting in Jacobian sub-problems on subdomains is also worthy of
further investigation as it points to nonlinear domain decomposition approaches, such as that described in \cite{Turner13a}.

\bibliographystyle{siam}
\bibliography{ddnkto}

\end{document}